\DeclareMathOperator{\Aut}{Aut}
\DeclareMathOperator{\Cl}{Cl}
\DeclareMathOperator{\disc}{disc}
\DeclareMathOperator{\Gal}{Gal}
\DeclareMathOperator{\PSL}{PSL}
\DeclareMathOperator{\SL}{SL}
\DeclareMathOperator{\Surj}{Surj}
\DeclareMathOperator{\tame}{tame}
\newcommand{\EE}{\mathbb{E}}
\newcommand{\FF}{\mathbb{F}}
\newcommand{\NN}{\mathbb{N}}
\newcommand{\QQ}{\mathbb{Q}}
\newcommand{\RR}{\mathbb{R}}
\newcommand{\ZZ}{\mathbb{Z}}
\newcommand{\mfa}{\mathfrak{a}}
\newcommand{\mfo}{\mathfrak{o}}
\newcommand{\mfP}{\mathfrak{P}}
\newcommand{\mfp}{\mathfrak{p}}
\newcommand{\mfq}{\mathfrak{q}}
\newcommand{\mfz}{\mathfrak{z}}
\newcommand{\msP}{\mathscr{P}}
\newtheorem{thm}{Theorem}
\newtheorem{Lemma}[thm]{Lemma}
\newtheorem{conj}[thm]{Conjecture}
\newtheorem{prop}[thm]{Proposition}
\newtheorem{heur}[thm]{Heuristic}
\newtheorem{Claim}[thm]{Claim}
\newtheorem{cond}[thm]{Conditions}
\newtheorem{defn}[thm]{Definition}
\newtheorem{rem}[thm]{Remark}
\newtheorem{ex}[thm]{Example}
\newtheorem{notn}[thm]{Notation}
\numberwithin{equation}{section}
\numberwithin{thm}{section}
\begin{document}

\markboth{Simon Rubinstein-Salzedo}
{$A_4$ Invariants and the Cohen-Lenstra Heuristics}

\title{Invariants for $A_4$ Fields and the Cohen-Lenstra Heuristics}

\author{Simon Rubinstein-Salzedo}

\address{Department of Mathematics, Dartmouth College, Hanover, New Hampshire 03755, USA}
\email{simon.rubinstein-salzedo@dartmouth.edu}

\maketitle

\begin{abstract}
This article discusses deviations from the Cohen-Lenstra heuristics when roots of unity are present. In particular, we propose an explanation for the discrepancy between the observed number of cyclic cubic fields whose 2-class group is $C_2\times C_2$ and the number predicted by the Cohen-Lenstra heuristics, in terms of an invariant living in a quotient of the Schur multiplier group. We also show that, in some cases, the definition of the invariant can be simplified greatly, and we compute the invariant when the cubic field is ramified at exactly one prime, up to $10^8$.
\end{abstract}

\section{Introduction} \label{introd}

Cohen and Lenstra in \cite{CL83} were interested in studying the distribution of class groups of quadratic fields, and perhaps, abelian extensions of $\QQ$ more generally. These heuristics were extended by Cohen and Martinet in \cite{CM87} and \cite{CM90} to fields of more general type.


We will restrict ourselves to looking at the distribution of the Sylow $p$-subgroups of the class groups of number fields. If $K$ is a number field, we let $\Cl_p(K)$ denote the Sylow $p$-subgroup of $\Cl(K)$.

\begin{notn} Throughout this article, we use the notation $C_n$ to denote the cyclic group of order $n$. \end{notn}

In the case of quadratic fields, Cohen and Lenstra made the following conjecture:

\begin{conj}[Cohen-Lenstra] Let $p$ be an odd prime. Let $D^\pm(X)$ denote the set of real (respectively imaginary) quadratic fields $K$ with $|\disc(K)|<X$. Let $A$ be a finite abelian $p$-group. Then \[\lambda^\pm(A) = \lim_{X\to\infty} \frac{\#\{K\in D^\pm(X):\Cl_p(K)\cong A\}}{\# D^\pm(X)}\] exists, and we have \[\lambda^+(A)=c^+|\Aut(A)|^{-1}\times|A|^{-1},\qquad \lambda^-(A)=c^-|\Aut(A)|^{-1}\] for certain explicit constants $c^+$ and $c^-$, which are independent of $A$. \end{conj}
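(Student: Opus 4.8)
The conjecture is open in general, so the plan I would follow is the moment-theoretic one: first reduce it to a statement about averages of surjection counts, then attack those by counting extensions of $\QQ$, and---where that stalls---fall back on the function-field model.

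\emph{Step 1: reduction to moments.} For a finite abelian $p$-group $A$, set $M^\pm(A)=\sum_B\lambda^\pm(B)\,\#\Surj(B,A)$, the sum over isomorphism classes of finite abelian $p$-groups $B$. Feeding in the closed forms $\lambda^-(B)=c^-/|\Aut(B)|$ and $\lambda^+(B)=c^+/(|B|\,|\Aut(B)|)$, one computes $M^-(A)=1$ and $M^+(A)=|A|^{-1}$ for every $A$; conversely these moments are bounded, so by a standard determinacy argument for distributions on finite abelian $p$-groups they pin $\lambda^\pm$ down uniquely. Hence the conjecture is equivalent to the two averaged-surjection identities
\[
\lim_{X\to\infty}\frac1{\#D^-(X)}\sum_{K\in D^-(X)}\#\Surj(\Cl_p(K),A)=1,\qquad
\lim_{X\to\infty}\frac1{\#D^+(X)}\sum_{K\in D^+(X)}\#\Surj(\Cl_p(K),A)=\frac1{|A|},
\]
together with the tightness input that lets convergence of \emph{all} these moments force convergence of the individual frequencies $\lambda^\pm(A)$.

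\emph{Step 2: from surjections to counting extensions.} Up to the harmless factor $|\Aut(A)|$, class field theory identifies $\#\Surj(\Cl_p(K),A)$ with the number of unramified extensions $L/K$ having $\Gal(L/K)\cong A$; taking Galois closures over $\QQ$ turns the average over $K\in D^\pm(X)$ into a count of $A\rtimes C_2$-extensions of $\QQ$ of bounded quadratic-resolvent discriminant with prescribed (everywhere-unramified-over-$K$) decomposition behaviour. A Malle--Bhargava local--global heuristic is then expected to reduce this count to a product of local densities that collapses to exactly $1$ (imaginary) or $|A|^{-1}$ (real) once the ramified primes are summed. For $A=C_3$ the required first moments are the Davenport--Heilbronn theorem in its refined form, and Bhargava-style parametrizations confirm further small $A$, so the plan is known to give the predicted constants as far as present technology reaches.

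\emph{Step 3: the obstruction.} The real difficulty is uniformity: the available number-field counts deteriorate as $|A|$ grows, so the contribution of class groups of large $p$-rank cannot be controlled unconditionally, yet Step 1's passage from moments to frequencies requires them all. The most promising genuine route is the function-field analogue over $\FF_q(t)$, where $\#\Surj(\Cl_p(\cdot),A)$ becomes a point count on a Hurwitz space of $A\rtimes C_2$-covers of $\PP^1$, the Ellenberg--Venkatesh--Westerland homological-stability theorem supplies the correct moments as $q\to\infty$, and recent work shows those moments do determine the Cohen--Lenstra measure. The step I cannot see how to carry out is transporting this stability phenomenon, or an analytic surrogate for it, from $\FF_q(t)$ back to $\QQ$; so I would present Steps 1--2 together with the function-field model of Step 3 as the backbone of the argument, while stating plainly that over $\QQ$ the conjecture remains unproved.
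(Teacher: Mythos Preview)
The statement you were asked to prove is labeled in the paper as a \emph{conjecture}, and the paper supplies no proof of it whatsoever; it is quoted purely as background before the paper moves on to discuss deviations from the heuristics. So there is nothing to compare your proposal against.

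You recognize this yourself in your first sentence (``The conjecture is open in general'') and again in your last (``over $\QQ$ the conjecture remains unproved''), so what you have written is not a proof proposal but a survey of the moment-theoretic framework and the function-field analogue. That survey is broadly accurate: the reduction to surjection moments, the identification with counts of $A\rtimes C_2$-extensions, the Davenport--Heilbronn and Bhargava inputs for small $A$, and the Ellenberg--Venkatesh--Westerland homological-stability route over $\FF_q(t)$ are all standard ingredients. But none of it closes the gap, and you say so. The honest summary is that the paper does not prove this statement, you do not prove this statement, and nobody currently can.
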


The statement of this conjecture suggests many further questions. One such question is why we need to restrict to the case of an odd prime $p$. The reason is that the 2-torsion in the class group is controlled by genus theory. If $K$ is a quadratic field and $r$ is the number of primes dividing $\disc(K)$, then the 2-torsion of $\Cl(K)$ is isomorphic to $C_2^{r-1}$ or $C_2^{r-2}$ (see \cite{FT93}, Corollary 1 to Theorem 39, for a more precise statement and a proof). In particular, the 2-torsion in quadratic fields is rarely equal to 0 and can easily become arbitrarily large as we vary by discriminant.

Of course, we need not lose interest in class groups as soon as we step beyond quadratic fields: we could ask the same question for fields of other types. Furthermore, in this case, the 2-power torsion will not necessarily be governed by genus theory, so we might also allow $p$ to be 2. So, we could make the following guess, by attempting to apply the Cohen-Lenstra heuristics to situations for which we have no \textit{a priori} reason for believing that they are appropriate.

\begin{heur}[Proto-Cohen-Lenstra Heuristics] Let $n$ be a positive integer, and let $G$ be a transitive permutation group on a set of size $n$. Furthermore, let $(r_1,r_2)$ be a signature, with $r_1+2r_2=n$. Let $D(X)$ be the set of number fields $K$ the absolute value of whose discriminant is less than $X$, and so that the Galois group $\Gal(K^\sharp/\QQ)$ of the Galois closure of $K$ is isomorphic to $G$, and $K$ has $r_1$ real embeddings and $r_2$ pairs of complex conjugate embeddings. Finally, let $p$ be a prime not dividing $|G|$ and let $A$ be a finite abelian $p$-group. Then \[\lim_{X\to\infty} \frac{\#\{K\in D(X):\Cl_p(K)\cong A\}}{\# D(X)}\] exists, and is inversely proportional to $|\Aut(A)|\times |A|^{r_1+r_2-1}$. \end{heur}

However, the proto-Cohen-Lenstra heuristics sometimes fail for silly reasons. Here is an example:

\begin{Lemma} Let $n=3$ and $G=C_3$ in the above heuristics. If $p\equiv 2\pmod 3$, then the $p$-rank of $K$, $r_p(K):=\dim_{\FF_p}(\Cl(K)/p\Cl(K))$, is even for all such $K$. \end{Lemma}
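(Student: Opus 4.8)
The plan is to exploit the Galois action of $C_3$ on the class group. Let $K$ be a cyclic cubic field with $\Gal(K/\QQ) = C_3 = \langle \sigma \rangle$, and let $p \equiv 2 \pmod 3$ be a prime (automatically $p \neq 3$, so $p$ does not divide $|C_3|$). The group $\sigma$ acts on $V := \Cl(K)/p\Cl(K)$, an $\FF_p$-vector space, making $V$ a module over the group algebra $\FF_p[C_3]$. Since $p \nmid 3$, this algebra is semisimple (Maschke), so $\FF_p[C_3] \cong \FF_p \times R$, where $R = \FF_p[x]/(x^2+x+1)$ is the quotient by the cyclotomic factor.

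\medskip

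\noindent\textbf{Step 1: Kill the trivial isotypic part.} The $\sigma$-invariants $V^\sigma$ correspond to the class group of the base field $\QQ$ modulo $p$, which is trivial since $\QQ$ has trivial class group; more precisely, by the standard norm argument, the invariant subspace $V^\sigma$ is a quotient of $\Cl(\QQ)/p\Cl(\QQ) = 0$ (one uses that the transfer/norm composed with restriction is multiplication by $[K:\QQ]=3$, a unit mod $p$, to see $\Cl_p(\QQ)$ injects as a direct summand, but it is zero). Hence $V$ is a module over $R$ alone; equivalently, the nontrivial element $1+\sigma+\sigma^2$ acts as $0$ and $V$ is annihilated by the idempotent for the trivial character.

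\medskip

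\noindent\textbf{Step 2: Observe $R$ is a field.} Since $p \equiv 2 \pmod 3$, the polynomial $x^2+x+1$ is irreducible over $\FF_p$ (its roots are primitive cube roots of unity, which lie in $\FF_{p^2} \setminus \FF_p$ exactly when $p \not\equiv 1 \pmod 3$). Therefore $R \cong \FF_{p^2}$ is a field, and $V$, being an $R$-module, is an $\FF_{p^2}$-vector space.

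\medskip

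\noindent\textbf{Step 3: Conclude.} As an $\FF_{p^2}$-vector space, $V$ has some dimension $d$ over $\FF_{p^2}$, hence $\dim_{\FF_p} V = 2d$ is even. Since $r_p(K) = \dim_{\FF_p}(\Cl(K)/p\Cl(K)) = \dim_{\FF_p} V$, this proves $r_p(K)$ is even.

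\medskip

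The only subtle point is Step 1: one must make sure the trivial-character component of $V$ genuinely vanishes rather than merely being small. This follows because $\Cl(\QQ) = 0$ and the natural map $\Cl(\QQ) \to \Cl(K)^\sigma$ has cokernel killed by $3$ (being dual to the norm map, whose composite with the natural inclusion is multiplication by $3$), so after tensoring with $\FF_p$ the invariants $V^\sigma$ vanish. Everything else is a routine application of semisimplicity of $\FF_p[C_3]$ and the fact that $x^2+x+1$ stays irreducible precisely in the regime $p \equiv 2 \pmod 3$.
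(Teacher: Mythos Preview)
Your proof is correct. Both your argument and the paper's rest on the same key fact---that $1+\sigma+\sigma^2$ annihilates $\Cl(K)$, equivalently that $\Cl_p(K)^\sigma=0$---which comes from the identity $\mfa^{1+\sigma+\sigma^2}=(N\mfa)\mfo_K$ together with the triviality of $\Cl(\QQ)$. The difference is only in how the parity conclusion is extracted. The paper argues by elementary orbit-counting: if $r_p(K)$ were odd then $p^{r_p(K)}-1\not\equiv 0\pmod 3$, forcing a nontrivial $C_3$-fixed point among the order-$p$ ideal classes, which contradicts the vanishing of invariants. You instead observe that once $V^\sigma=0$, the space $V$ is a module over $\FF_p[x]/(x^2+x+1)\cong\FF_{p^2}$ and therefore has even $\FF_p$-dimension. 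Your packaging is more structural and points directly at the $\ZZ[\zeta_\ell]$-module viewpoint the paper adopts immediately afterward; the paper's route is more elementary and self-contained.

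One minor remark: your justification of Step~1 is a little roundabout. The cleanest line is simply that $(1+\sigma+\sigma^2)\Cl(K)=0$ by the norm identity above; since $3\in\FF_p^\times$, the idempotent $\tfrac{1}{3}(1+\sigma+\sigma^2)$ then kills $V$, giving $V^\sigma=0$ directly without discussing cokernels or commuting invariants with reduction mod $p$.
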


\begin{proof} The Galois group $C_3=\{1,\sigma,\sigma^2\}$ of $K$ over $\QQ$ acts on $\Cl_p(K)$. The number of ideal classes of order $p$ is $p^{r_p(K)}-1$, which is congruent to $0\pmod 3$ if and only if $r_p(K)$ is even. Hence, if $r_p(K)$ is odd, then there must be a nontrivial $p$-torsion ideal class $C$ fixed by the Galois action. In particular, there is a nonprincipal ideal $\mfa\in C$ of order $p$ so that $\mfa$, $\mfa^\sigma$, and $\mfa^{\sigma^2}$ are all in the same ideal class. We now show that their product $\mfa^{1+\sigma+\sigma^2}$ is equal to the principal ideal $(N\mfa)\mfo_K$.  For any $a\in\mfa$, $Na=a^{1+\sigma+\sigma^2}\in\mfa^{1+\sigma+\sigma^2}$, and the $Na$ generate $(N\mfa)\mfo_K$ as an $\mfo_K$-module, so $(N\mfa)\mfo_K\subset\mfa^{1+\sigma+\sigma^2}$. Now, the norms of both ideals $\mfa^{1+\sigma+\sigma^2}$ and $(N\mfa)\mfo_K$ are $(N\mfa)^3$. Hence, they are equal. Furthermore, $(N\mfa)\mfo_K$ is principal, since it is generated by the element $N\mfa$. Hence $C^3=1$ in $\Cl_p(K)$. But this is impossible, as $3\nmid p$. \end{proof}

We can patch the proto-Cohen-Lenstra heuristics by excluding those $A$ that are ruled out by this Lemma and related ones. Furthermore, as the proof of the Lemma hints, for those $A$ that are allowable, we need the automorphisms to be compatible with the Galois action, in the case that $K$ is actually a Galois number field, as was first suggested by Cohen and Martinet in \cite{CM87}. In particular, if $G=C_\ell$, then we need $A$ to be a $\ZZ[\zeta_\ell]$-module. This suggests the following refinement of the proto-Cohen-Lenstra heuristics, at least in the case where $n=\ell$ is a prime, and $G=C_\ell$:

\begin{heur}[Refined Cohen-Lenstra Heuristics] Let $\ell$ be an odd prime, and let $G=C_\ell$. Let $D(X)$ be the set of $C_\ell$ number fields with absolute discriminant less than $X$. (Such fields are necessarily totally real.) Also, let $p$ be a prime different from $\ell$ and $A$ an abelian $p$-group with the structure of a $\ZZ[\zeta_\ell]$-module. Then \[\lim_{X\to\infty}\frac{\# \{K\in D(X):\Cl_p(K)\cong A\}}{\# D(X)}\] exists, and is inversely proportional to $|\Aut_{\ZZ[\zeta_\ell]}(A)|\times |A|^{\ell-1}$. \end{heur}

Another, more modern and sometimes cleaner, way to interpret the refined Cohen-Lenstra heuristics is based on the following idea from probability theory. Let $\mu$ be a probability distribution on $\RR$. Define the $k^\text{th}$ moment of $\mu$ to be \[a_k=\int_{-\infty}^\infty x^k\; d\mu.\] From knowing the sequence of moments $a_1,a_2,\ldots$, it is possible to reconstruct $\mu$ under fairly mild hypotheses. To be more precise, define the moment generating function to be the power series \[A(x)=\sum_{k=1}^\infty a_k\frac{x^k}{k!}.\] Then assuming that $A(x)$ has positive radius of convergence, $\mu$ is the only probability distribution having moment generating function $A(x)$. (See \cite{Bill95}, Chapter 30, for a proof.)

In the context at hand, we can define an analogue of a moment for a probability distribution $f$ of finite abelian $p$-groups as follows. Fix a finite abelian $p$-group $A$, and look at the expected number of surjections (in whichever category is appropriate) from an $f$-random finite abelian $p$-group $X$ to $A$. This number behaves as the ``$A^\text{th}$ moment of $X$.'' We expect that, just as in the situation for classical moments of probability distributions, these $A^\text{th}$ moments of $X$ determine $f$, assuming that $f$ is fairly well-behaved. (We will not prove that these moments determine $f$, although we expect the proof not to be too difficult, since we are only using this example for motivation here.)

We now put this in proper context. If $A$ is a finite abelian $p$-group that also has the structure of a $\ZZ[\zeta_\ell]$-module, then we would like to understand the number \[\EE(\#\Surj_{\ZZ[\zeta_\ell]}(\Cl(K),A)).\] Here $\Surj$ is the set of surjective maps, and $\EE$ denotes the expected value. Let us look at the case of $n=3$ and $G=C_3$. Then the refined Cohen-Lenstra heuristics imply that \begin{equation} \label{CLrank} \lim_{X\to\infty} \frac{1}{\# D(X)}\sum_{K\in D(X)} p^{r_p(K)}=\begin{cases} \left(1+\frac{1}{p}\right)^2 & p\equiv 1\pmod 3,\\ 1+\frac{1}{p^2} & p\equiv 2\pmod 3.\end{cases}\end{equation}

In the case when $A=\FF_p[\zeta_3]$ as a $\ZZ[\zeta_3]$-module (so that the underlying abelian group structure for $A$ is $C_p$ if $p\equiv 1\pmod 3$ and $C_p\times C_p$ if $p\equiv 2\pmod 3$), then \begin{equation*} \EE(\#\Surj_{\ZZ[\zeta_3]}(\Cl(K),A))=\lim_{X\to\infty} \frac{1}{\# D(X)}\sum_{K\in D(X)} (p^{r_p(K)}-1).\end{equation*} So, in particular, if $p=2$, we would expect the number of surjections from the class group of a random $C_3$ field to $C_2\times C_2$ to be $1/4$. As we shall see shortly, however, this appears not to be the case.

The goal of this article is to attempt to explain this discrepancy, noted by Malle in \cite{Malle08} in the Cohen-Lenstra heuristics when roots of unity are present. We will focus on one case: that of $C_3$ fields whose class groups surject onto $C_2\times C_2$. It will turn out to be more convenient for us not to work directly with the $C_3$ field; instead, class field theory associates to a surjection from the class group of a $C_3$ field to $C_2\times C_2$ an $A_4$ field; we work with this $A_4$ field instead.

In \S\ref{Malle}, we present the results of Malle's computations and what the modifications to the Cohen-Lenstra heuristics appear to be. In order to introduce our new results and explanations, we discuss the Schur multiplier and a variant of it called the reduced Schur multiplier; this is done in \S \ref{Schur}.

The new material begins in \S \ref{nf-invts}. Here we present an invariant associated to an $A_4$ field which is expected to explain the discrepancy in (\ref{extrasurj}). In \S \ref{sec:comp}, we present an algorithm to compute the invariant, and we show that in certain circumstances, the invariant has a simple interpretation as the parity of the class group of a certain field. In \S \ref{sec:samp}, we perform an explicit computation of the invariant for the smallest $A_4$ field. Finally, we end this article with Table \ref{invtdata}, which summarizes the data collected from $377529$ fields, and a guess about a possible secondary term.

This article is adapted from the author's PhD thesis \cite{RSthesis}. A reader who wishes for a slightly more leisurely exposition may prefer to read Chapters 2 and 3 of the thesis instead.

\section{Malle's Computations} \label{Malle}

\begin{notn}We use the following notation: for $q,k\in\NN$, let \[(q)_k=\prod_{i=1}^k (1-q^{-1}),\qquad (q)_\infty=\prod_{i=1}^\infty (1-q^{-1}).\] \end{notn}

After performing many tests, Malle proposed a list of cases in which the Cohen-Lenstra heuristics are expected to fail. In particular, when $p=2$, they should always fail. In the case of $C_3$ fields, the Cohen-Lenstra heuristics predict that the Sylow 2-subgroup of the class group should be isomorphic to $C_2\times C_2$ with probability \[\frac{1}{12}\frac{(4)_\infty}{(4)_1}\approx .0765.\] Instead, in his sample of over 16 million fields, he finds that the actual probability is closer to $.13$, nearly twice as large as expected. Similarly, equation $(\ref{CLrank})$ does not seem to hold when $p=2$: equation $(\ref{CLrank})$ predicts that the average size of the maximal elementary abelian 2-subgroup of $\Cl(K)$ be $\frac{5}{4}$, but Malle's computations suggest that the correct number is $\frac{3}{2}$.

In terms of expected number of surjections, it appears that \begin{equation}\EE(\#\Surj_{\ZZ[\zeta_3]}(\Cl(K),C_2\times C_2))=1/2, \label{extrasurj} \end{equation} rather than $1/4$, as mentioned in the previous section.

In general, Malle expects the Cohen-Lenstra heuristics to fail at the prime $p$ when the ground field contains $p^\text{th}$ roots of unity. In this case, he expects that if $A$ is a nontrivial abelian $p$-group, then $\Cl_p(K)\cong A$ more often than the Cohen-Lenstra heuristics predict.

\begin{rem} For $C_3$ fields, the Cohen-Lenstra prediction also fails for $p=3$, since the 3-torsion in the class group is governed by genus theory, just as in the case of $p=2$ for quadratic fields. More generally, the Cohen-Lenstra predictions at a prime $p$ do not hold for fields with Galois group $G$ if $p$ divides $|G|$ because in this case there is a contribution coming from genus theory. In this article, we are not especially interested in the failure for that reason, since genus theory is well-understood. Thus, we will only be concerned with deviations due to the existence of $p^\text{th}$ roots of unity. \end{rem}

\section{Schur Multipliers and Variants} \label{Schur}

Our proposed correction to the Cohen-Lenstra heuristics in the presence of roots of unity can be described in terms of the reduced Schur multiplier. We first recall the definition of the Schur multiplier, then move on to the reduced Schur multiplier.

\begin{defn} Let $G$ be a group. \begin{enumerate} \item The Schur multiplier group of $G$ is defined to be the second homology group $H_2(G,\ZZ)$. \item A central extension of $G$ is a short exact sequence \[0\to A\to\widetilde{G}\to G\to 1,\] where $A$ is an abelian group, and $A$ is contained in the center of $\widetilde{G}$. \item A stem extension of $G$ is a central extension \[0\to A\to\widetilde{G}\to G\to 1\] so that $A$ is contained in the intersection of the center of $\widetilde{G}$ and the commutator subgroup of $\widetilde{G}$. \end{enumerate} \end{defn}

If $G$ is finite, there is a stem extension $\widetilde{G}$ of maximal order; in fact, there may be more than one of maximal order, and such $\widetilde{G}$ need not be isomorphic. However, as $\widetilde{G}$ varies over maximal stem extensions, the corresponding $A$ are all isomorphic, and they are isomorphic to the Schur multiplier group $H_2(G,\ZZ)$. We call a stem extension of the form \[0\to H_2(G,\ZZ)\to\widetilde{G}\to G\to 1\] a Schur cover. If, in addition, $G$ is a perfect group (i.e., $G=[G,G]$ is its own commutator subgroup), then there is a unique such group $\widetilde{G}$.

Suppose $G$ is a finite group. Then $H_2(G,\ZZ)$ is a finite group all of whose elements have order dividing the order of $G$. Also, for a prime $p$, the Sylow $p$-subgroup of $H_2(G,\ZZ)$ is trivial if the Sylow $p$-subgroup of $G$ is cyclic. More information about the Schur multiplier, including many examples of the Schur multiplier group $H_2(G,\ZZ)$ for various finite groups $G$, can be found, for instance, in \cite{Karp87}. 


In fact, what we really need is not the full Schur multiplier group, but a certain quotient of it, associated to a certain union of conjugacy classes of $G$. To this end, fix a union of conjugacy classes $c\subset G$. Let \[0\to H_2(G,\ZZ)\to\widetilde{G}\to G\to 1\] be a Schur cover. Suppose $x\in c$ and $y\in G$ commute. Lift $x$ and $y$ to $\widetilde{x}$ and $\widetilde{y}$, respectively, in $\widetilde{G}$. (This can be done in multiple ways; choose one arbitrarily.) Then the commutator $[\widetilde{x},\widetilde{y}]_{\widetilde{G}}$ lies in $H_2(G,\ZZ)$, and this element is independent of the choice of lifts. Call this element $\langle x,y\rangle_{\widetilde{G}}$. Let $Q_c$ denote the subgroup of $H_2(G,\ZZ)$ generated by all the $\langle x,y\rangle_{\widetilde{G}}$'s.

\begin{defn} The reduced Schur multiplier of a pair $(G,c)$ is the quotient \[H_2(G,c,\ZZ)=H_2(G,\ZZ)/Q_c.\] A reduced Schur cover of $(G,c)$ is the quotient $\widetilde{G}_c=\widetilde{G}/Q_c$. \end{defn}

A reduced Schur cover is a largest stem extension of $G$ so that $c$ lifts bijectively to a union of conjugacy classes $\widetilde{c}\subset\widetilde{G}_c$.

\begin{rem} We will tend to be slightly sloppy with our terminology when referring to reduced Schur covers. In the future, when we refer to a reduced Schur cover $\widetilde{G}_c$, we shall assume that it comes packaged with a union of conjugacy classes $\widetilde{c}\subset\widetilde{G}_c$ which bijects onto $c$, even when no explicit choice of $\widetilde{c}$ is provided. \end{rem}

\begin{ex} Suppose $G=A_5$. If $c$ is the conjugacy class of 3-cycles, then $H_2(G,c,\ZZ)=H_2(G,\ZZ)\cong C_2$, and the corresponding extension is \[0\to C_2\to\widetilde{A}_5\to A_5\to 1.\] To see this, note that if $x\in c$ is a 3-cycle, then the only elements of $A_5$ commuting with $x$ are $1,x,x^2$. Hence $Q_c$ is trivial. However, if $c$ is the conjugacy class of $(12)(34)$, then $H_2(G,c,\ZZ)$ is trivial. To see this, we work with matrices, since $A_5\cong\PSL_2\FF_5$ and $\widetilde{A}_5\cong\SL_2\FF_5$. We must show that $-I\in Q_c$, and we do this by choosing a suitable $x$ and $y$. Take \[\widetilde{x}=\begin{pmatrix} 3 & 3 \\ 4 & 1 \end{pmatrix},\qquad \widetilde{y}=\begin{pmatrix} 1 & 1 \\ 3 & 4 \end{pmatrix},\] both matrices in $\SL_2\FF_5$ whose images in $\PSL_2\FF_5$ commute. Then their commutator is $-I$, so $-I\in Q_c$, so $H_2(G,c,\ZZ)$ is trivial. \end{ex}

\section{Invariants for Number Fields} \label{nf-invts}

We now introduce the invariant of Ellenberg and Venkatesh as described in \cite{EV10}. This will be an attempt to explain (\ref{extrasurj}), as follows: Ordinarily, we would expect the right-hand side of (\ref{extrasurj}) to be $1/4$, but in this case, it is twice as large as we anticipate. To each surjection $\varphi:\Cl(K)\twoheadrightarrow C_2\times C_2$ of $\ZZ[\zeta_3]$-modules, we associate an invariant $\mfz(\varphi)\in\{0,1\}$. We then hope that \[\EE(\#\{\varphi\in\Surj_{\ZZ[\zeta_3]}(\Cl(K),C_2\times C_2):\mfz(\varphi)=0\})=\frac{1}{4}\] and \[\EE(\#\{\varphi\in\Surj_{\ZZ[\zeta_3]}(\Cl(K),C_2\times C_2):\mfz(\varphi)=1\})=\frac{1}{4}.\] (For convenience, we will overuse the notation $\mfz$ a little bit: sometimes, we will write $\mfz(\varphi)$ to denote the invariant of a surjection, and sometimes we will write $\mfz(\rho)$ to denote the invariant of a field corresponding to a representation $\rho:G_K\to C_2\times C_2$.)

The motivation for this comes from the case of function fields, which was studied by Ellenberg, Venkatesh, and Westerland in \cite{EVW}. In this case, the extensions are parametrized by a Hurwitz space, which may have several connected components. On each connected component, the number of extensions agrees (asymptotically) with the Cohen-Lenstra predictions, but there is a discrepancy when there are multiple connected components. This is discussed at the end of \cite{EVW2}. In the number field case, we have no Hurwitz space to parametrize the extensions, but we are left with a vestige of the connected components, which are given in terms of the Schur multiplier.

We now consider the following scenario, which is a modification of that considered by Ellenberg and Venkatesh in \cite{EV10}. Let $K$ be a number field or function field, let $G$ be a finite group, and let $c=c_1\cup\cdots\cup c_r\subset G$ be a union of conjugacy classes. Then, we assume that the following conditions hold:

\begin{cond} \label{conditions1} \begin{enumerate} \item $G$ has trivial center. \item $c$ generates $G$. \item If $n$ is prime to the order of an element $g\in c$, then $g^n\in c$. \end{enumerate} \end{cond}

\begin{ex} These conditions hold if $G=A_4$ and $c$ is the union of the two conjugacy classes of 3-cycles. They also hold if $G=A_5$ and $c$ either the conjugacy class of 3-cycles or the conjugacy class of a product of two disjoint 2-cycles. \end{ex}

\begin{Lemma}[Ellenberg-Venkatesh] \label{EVlemma} Let $K$ be a totally real number field, let $G$ be a finite group, and let $c$ be a union of conjugacy classes of $G$, satisfying (3) from Conditions \ref{conditions1} above, and let $\rho:G_K\to G$ be a homomorphism so that \begin{enumerate} \item $\rho$ is trivial at all infinite places. \item $\rho$ is tamely ramified, and the image of each inertia group $I_{\mfP}$ in $G_K$ is a cyclic subgroup contained in $c\cup\{1\}$. \end{enumerate} Furthermore, we assume that $2H_2(G,c,\ZZ)=0$. Then $\rho$ lifts to an extension $\widetilde{\rho}:G_K\to\widetilde{G}_c$ which is trivial at all infinite places and tamely ramified. \end{Lemma}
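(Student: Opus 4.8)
The plan is to realize the lifting problem as an embedding problem and to show that the obstruction class, which a priori lives in $H^2(G_K, H_2(G,c,\ZZ))$ (with trivial action), actually lies in a subgroup on which multiplication by $2$ vanishes, so that the hypothesis $2H_2(G,c,\ZZ)=0$ forces it to die. Concretely, fix a reduced Schur cover $0\to H_2(G,c,\ZZ)\to\widetilde{G}_c\to G\to 1$ together with its distinguished union of conjugacy classes $\widetilde{c}$ bijecting onto $c$. Composing $\rho$ with the obstruction to splitting this extension gives a class $\omega(\rho)\in H^2(G_K, H_2(G,c,\ZZ))$; $\rho$ lifts to $\widetilde{G}_c$ if and only if $\omega(\rho)=0$. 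So the whole problem is to control $\omega(\rho)$ together with its local behavior at the finite and infinite places, since we also need the lift to be tamely ramified and trivial at infinity.

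First I would handle the local conditions, which is where condition (3) on $c$ and the choice of $\widetilde{c}$ do their work. At an infinite place $v$, $\rho$ is trivial by hypothesis, so $\omega(\rho)$ restricts to $0$ in $H^2(G_{K_v}, H_2(G,c,\ZZ))$; since $K$ is totally real, all archimedean $G_{K_v}$ are of order $\le 2$, and triviality of $\rho$ there means the lift can be taken trivial there too. At a finite place $\mfP$ where $\rho$ is ramified, the image of inertia $I_\mfP$ is cyclic, generated by some $g\in c$; because $\rho$ is tame, the image of the full decomposition group is (topologically) generated by a Frobenius $\phi$ and this $\sigma:=\rho(\text{generator of }I_\mfP)$ with the tame relation $\phi\sigma\phi^{-1}=\sigma^q$, and $\sigma^q\in c$ by condition (3). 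The point is that a cyclic subgroup $\langle g\rangle$ with $g\in c$ lifts to a cyclic subgroup $\langle\widetilde g\rangle$ of $\widetilde{G}_c$ of the same order with $\widetilde g\in\widetilde c$ — this is precisely the content of the statement (made earlier in the excerpt) that a reduced Schur cover is a largest stem extension in which $c$ lifts bijectively to $\widetilde c$; so $\omega(\rho)|_{I_\mfP}=0$, and moreover the tame relation can be lifted, so the local obstruction at $\mfP$ to a \emph{tamely ramified} lift vanishes. Thus $\omega(\rho)$ lies in the kernel of all these localization maps — it is a class in a Tate–Shafarevich-type group $\Sha^2(G_K, H_2(G,c,\ZZ))$ (unramified outside the bad set, trivial at the ramified and infinite places).

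Now comes the main point, which I expect to be the crux of the argument. One shows that this constrained obstruction group is killed by $2$. The mechanism, following Ellenberg–Venkatesh, is Poitou–Tate duality: such a $\Sha^2$ group is dual to a $\Sha^1$ group, and — this is the key input — the global reciprocity/duality pairing that identifies them is, up to the relevant twist, \emph{alternating} on the piece coming from the Schur multiplier, because the Schur multiplier class is itself a commutator pairing $\langle x,y\rangle_{\widetilde G}$; an alternating element of a finite group lies in the subgroup generated by squares of other elements, hence is $2$-divisible in a suitable sense, so the obstruction is $2$-torsion that is also infinitely $2$-divisible within this finite group, i.e. it lies in $2\cdot(\text{something})$. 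Invoking $2H_2(G,c,\ZZ)=0$ then forces $\omega(\rho)=0$. I would carry this out by (i) writing $\omega(\rho)$ as a cup product of the tautological class in $H^1(G_K,G^{\ab}$-ish data$)$ with itself under the commutator map $\wedge^2 G^{\text{something}}\to H_2(G,\ZZ)$, exactly as the definition of $\langle\cdot,\cdot\rangle$ suggests; (ii) noting a cup product $a\cup a$ with values through an alternating pairing is $2$-torsion; and (iii) combining with the hypothesis. The hard part is making (i)–(ii) precise: identifying $\omega(\rho)$ with a self-cup-product and checking the pairing is alternating on the reduced Schur multiplier, rather than merely skew. Once $\omega(\rho)=0$, a lift exists; the local analysis of the previous paragraph shows it can be chosen trivial at infinity and tamely ramified, completing the proof.
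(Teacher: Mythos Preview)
The paper does not prove this lemma at all: it is stated with attribution to Ellenberg and Venkatesh and cited from \cite{EV10}, and the text proceeds immediately to the definition of the invariant. So there is no proof in the paper to compare your proposal against.

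As for your sketch itself, the broad architecture (identify the obstruction as a class in $H^2(G_K,H_2(G,c,\ZZ))$, kill it locally using the bijective lift $\widetilde{c}\to c$ at tame ramified places and the triviality of $\rho$ at real places, then invoke a global duality argument and the hypothesis $2H_2(G,c,\ZZ)=0$) is the right shape and is in the spirit of the Ellenberg--Venkatesh argument. The genuinely incomplete step is the one you flag yourself: the identification of the global obstruction with a self-cup-product through an alternating pairing is not a formality. The extension class of a Schur cover is not literally $a\cup a$ for some degree-$1$ class $a$; rather, one must exploit the specific feature of stem extensions that the kernel sits inside the commutator subgroup, and then trace how this interacts with Poitou--Tate duality to force the $\Sha^2$ class to be $2$-divisible (hence zero under the hypothesis). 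Your items (i)--(ii) gesture at this but do not pin it down, and a reader could not reconstruct the argument from what you have written. If you want to turn this into a proof, you should consult \cite{EV10} directly for the precise mechanism by which the $2$-torsion hypothesis enters.
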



We may now define the invariant $\mfz(\rho)\in H_2(G,c,\ZZ)$. Suppose that $H_2(G,c,\ZZ)\neq 0$. For each finite place $v$ of $K$, let $\mfp_v$ be the corresponding prime and $k_v$ the residue field at $v$, and let $q_v$ be the size of $k_v$. Let $I_v$ be the inertia group of $G_K$ at $v$, and fix an element $\pi\in\mfp_v-\mfp_v^2$. We have a map $I_v\to k_v^\times$, given by $\sigma\mapsto \sigma(\pi)/\pi\mod{\mfp_v}$. Let $g_v$ be any inverse image of $-1$ so that $g_v$ topologically generates a subgroup of $I_v^{\tame}$ of index $\frac{q_v-1}{2}$. Now, each $x\in c$ is the image of a unique $x^\ast\in\widetilde{c}$.

\begin{defn} \label{inv-def} The invariant $\mfz(\rho)$ is defined to be \begin{equation}\mfz(\rho)=\prod_{v\text{ finite}}\widetilde{\rho}(g_v)(\rho(g_v)^\ast)^{-1}\in H_2(G,c,\ZZ).\label{invtdef}\end{equation} \end{defn}

This invariant is independent of choice of $\widetilde{\rho}$, $g_v$, and $I_v$. The independence is explained in \cite{EV10}.

\begin{defn} If $L/K$ is a Galois extension with group $G$, we say that all ramification of $L/K$ is of type $c$ if $L/K$ is tamely ramified, and for each prime $\mfP$ of $L$, either $\mfP$ is unramified, or else a generator of the inertia group at $\mfP$ is contained in $c$. \end{defn}

We consider Galois extensions $L/K$ with Galois group $G$ with the following properties: \begin{cond} \label{conditions2} \begin{enumerate} \item $G$ and $c$ satisfy Conditions \ref{conditions1} above. \item All ramification of $L/K$ is of type $c$. \item $K$ and $L$ are totally real number fields. \end{enumerate} \end{cond}

In the case where $G=A_5$ and $K=\QQ$, if $c$ is the conjugacy class of 3-cycles so that $H_2(G,c,\ZZ)\cong C_2$, we can define the invariant in more down-to-earth terms. In this case, $\widetilde{G}_c=\widetilde{A}_5$ and $\widetilde{c}$ is the conjugacy class of elements of order 3 in $\widetilde{G}_c$. 

\begin{Claim} If $L/\QQ$ is the $A_5$ field which is the fixed field of the kernel of $\rho$ and $\widetilde{L}$ is the fixed field of the kernel of $\widetilde{\rho}$, then the invariant $\mfz(\rho)$ is the number of primes $p\equiv 3\pmod 4$ with even ramification index in $\widetilde{L}$, modulo 2. \end{Claim}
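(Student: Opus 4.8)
The plan is to compute the product (\ref{invtdef}) defining $\mfz(\rho)$ one prime at a time. Here $G=A_5$, $c$ is the class of $3$-cycles, and we identify $H_2(G,c,\ZZ)\cong C_2$ with $\{\pm I\}\subset\SL_2\FF_5=\widetilde G_c$ via $A_5\cong\PSL_2\FF_5$; then $\widetilde c$ is the class of the $20$ elements of order $3$ in $\SL_2\FF_5$, each of which maps to a unique $3$-cycle, and we set $1^\ast:=I$. By Lemma \ref{EVlemma} (applicable since $2H_2(G,c,\ZZ)=0$) a lift $\widetilde\rho$ as in Definition \ref{inv-def} exists, and $\mfz(\rho)=\prod_p\mfz_p$ with $\mfz_p:=\widetilde\rho(g_p)(\rho(g_p)^\ast)^{-1}\in\{\pm I\}$; since $g_p\in I_p$, one has $\mfz_p=I$ whenever $\widetilde\rho$ is unramified at $p$, so the product is finite. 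Writing $e_p=|\widetilde\rho(I_p)|$ for the ramification index of $\widetilde L/\QQ$ at $p$ (well defined since $\widetilde L/\QQ$ is Galois), the whole claim reduces to showing
\[
\mfz_p=-I\quad\Longleftrightarrow\quad p\equiv 3\pmod 4\ \text{ and }\ e_p\ \text{is even},
\]
after which $\mfz(\rho)=(-I)^N$ with $N=\#\{\,p\equiv 3\pmod 4 : e_p\text{ even}\,\}$, and this equals $N\bmod 2$ under $H_2(G,c,\ZZ)\cong\ZZ/2$.

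To prove the displayed equivalence, fix $p$. Tameness of $\widetilde\rho$ at $p$ means $\widetilde\rho|_{I_p}$ factors through the pro-cyclic tame quotient $I_p^{\tame}$; since $q_p=p$, the element $g_p$ maps to $-1\in\FF_p^\times$ and generates the subgroup of index $(p-1)/2$ of $I_p^{\tame}$, so we may choose a topological generator $\gamma$ of $I_p^{\tame}$ with $g_p=\gamma^{(p-1)/2}$, and then $\widetilde\rho(\gamma)$ (resp.\ $\rho(\gamma)$) generates the finite cyclic group $\widetilde\rho(I_p)$ (resp.\ $\rho(I_p)$). Because $\rho(I_p)\subseteq c\cup\{1\}$ has order $1$ or $3$, which is odd, $e_p$ is even exactly when $-I\in\widetilde\rho(I_p)$. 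If $\rho$ is unramified at $p$, then $\rho(g_p)=1$, so $\mfz_p=\widetilde\rho(g_p)$; if $e_p=1$ this is $I$, and if $e_p=2$ then $\widetilde\rho(\gamma)=-I$ gives $\mfz_p=(-I)^{(p-1)/2}$, which is $-I$ precisely when $p\equiv 3\pmod 4$ (equivalently, $-1$ is a nonsquare mod $p$). In particular $p=2$ contributes $I$, since there $\widetilde L/\QQ$ is tame with $e_2$ odd and $2\not\equiv 3\pmod 4$.

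If instead $\rho$ is ramified at $p$, then tameness together with $\rho(I_p)\subseteq c\cup\{1\}$ forces $p\notin\{2,3\}$ and $\rho(I_p)$ cyclic of order $3$, so $e_p\in\{3,6\}$. When $e_p=3$, the group $\widetilde\rho(I_p)$ consists of $I$ and two elements of order $3$, hence lies in $\widetilde c\cup\{I\}$, and $\langle\widetilde\rho(\gamma)\rangle\to\langle\rho(\gamma)\rangle$ is an isomorphism whose inverse is $x\mapsto x^\ast$; thus $\widetilde\rho(g_p)=\widetilde\rho(\gamma)^{(p-1)/2}$ is the lift of $\rho(\gamma)^{(p-1)/2}=\rho(g_p)$ inside this order-$3$ group, i.e.\ $\widetilde\rho(g_p)=\rho(g_p)^\ast$ and $\mfz_p=I$. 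When $e_p=6$, set $h=\widetilde\rho(\gamma)$; then $h$ has order $6$, $h^3=-I$, and $h=-g$ where $g:=h^4$ has order $3$ and $\overline g=\overline h$ in $A_5$, so
\[
\widetilde\rho(g_p)=h^{(p-1)/2}=(-I)^{(p-1)/2}\,g^{(p-1)/2},
\]
and since $\langle g\rangle\to\rho(I_p)$ is an isomorphism, $g^{(p-1)/2}$ is the lift inside $\langle g\rangle$ of $(\overline g)^{(p-1)/2}=\rho(g_p)$, i.e.\ $g^{(p-1)/2}=\rho(g_p)^\ast$. Hence $\mfz_p=(-I)^{(p-1)/2}$, which is $-I$ exactly when $p\equiv 3\pmod 4$. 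This proves the equivalence in every case; multiplying over $p$ gives the claim. (Independence of the choice of lift $\widetilde\rho$—which changes $\widetilde L$ by a quadratic field—is inherited from the already-established well-definedness of $\mfz(\rho)$, but can also be verified directly by quadratic reciprocity.)

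I expect the main obstacle to be the bookkeeping in the two ramified subcases: one must pin down $g_p=\gamma^{(p-1)/2}$ from the index-$(p-1)/2$ condition, carry it through the (non-canonical) tame character $I_p^{\tame}\to\mu_{p-1}\subset\FF_p^\times$, and check that passing to a prime-to-$3$ power commutes with the canonical lift map $c\to\widetilde c$ inside a cyclic group—then, when $e_p=6$, cleanly extract the sign $(-I)^{(p-1)/2}$ via the factorization $h=-g$. Verifying that none of the intermediate choices ($\gamma$, the generator of $\widetilde\rho(I_p)$, the factorization $h=-g$) affects the final answer is routine but needs care, and is where a careless write-up would go wrong.
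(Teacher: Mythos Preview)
Your proof is correct and follows essentially the same line as the paper: compute the local factor at each finite place by analyzing where $g_v$ lands in the cyclic tame inertia group, and observe that when $e_v$ is even the factor is $(-I)^{(q_v-1)/2}$. The paper packages the ramified case via a parity-of-order observation (in a cyclic group of order $\equiv 2\pmod 4$, the $r$th power of a generator has even order iff $r$ is odd), whereas you write out the equivalent computation explicitly through the factorization $h=-g$; the content is identical.

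One small inaccuracy: your assertion that $\rho$ ramified at $p$ forces $p\notin\{2,3\}$ is not right for $p=2$, since order-$3$ inertia at $2$ is tame. In that situation the tameness of $\widetilde\rho$ forces $e_2=3$, and more to the point, whenever $e_p$ is odd the map $\widetilde\rho(I_p)\to\rho(I_p)$ is an isomorphism inverted by $x\mapsto x^\ast$, so $\mfz_p=I$ for \emph{any} choice of $g_p$ (which also sidesteps the awkwardness of $(p-1)/2$ at $p=2$). The conclusion is unaffected.
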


\begin{proof} We check the contribution to (\ref{invtdef}) at each prime. If $\widetilde{L}/L$ is unramified above $v$, then the contribution to the product is $1\in\{\pm 1\}$. Now suppose that $\widetilde{L}/L$ is ramified above $v$. We claim that if $v\equiv 3\pmod 4$, then the contribution to the product is $-1$, and if $v\equiv 1\pmod 4$, then the contribution to the product is $+1$. To see this, let $\sigma$ be a topological generator of $I_v^{\tame}$ for which $g_v=\sigma^{\frac{q_v-1}{2}}$. Then for each prime $\mfP$ of $L$ above $v$, $\rho(\sigma)$ is a generator for the inertia group $I_\mfP$ at $\mfP$, and $\rho(g_v)=\rho(\sigma)^{\frac{q_v-1}{2}}$. Furthermore, if $\msP$ is the prime of $\widetilde{L}$ above $\mfP$, then $\widetilde{\rho}(\sigma)$ is a generator for $I_\msP$, and $\widetilde{\rho}(g_v)=\widetilde{\rho}(\sigma)^{\frac{q_v-1}{2}}$. Also, note that the order of $I_\msP$ is twice that of $I_\mfP$, so the order of $I_\msP$ is congruent to $2\pmod 4$ (since we are assuming that all ramification in $L$ is of type a 3-cycle). Now, the $r^\text{th}$ power of a generator of a cyclic group of order congruent to $2\pmod 4$ has even order if and only if $r$ is odd. Hence, $\widetilde{\rho}(g_v)$ has even order if $v\equiv 3\pmod 4$ and odd order if $v\equiv 1\pmod 4$. Note that there are two elements in $\widetilde{A}_5$ which map to $\rho(g_v)\in A_5$, with one having even order and one having odd order. Hence, the contribution to the product is $+1$ if $\widetilde{\rho}(g_v)$ has odd order and $-1$ if $\widetilde{\rho}(g_v)$ has even order. Hence, the claim is valid, and so the product is $-1$ to the power of the number of primes congruent to $3\pmod 4$ which ramifiy in $\widetilde{L}/L$. Since all primes in $L$ have odd ramification degree and all ramified primes in $\widetilde{L}/L$ have ramification degree 2, the full claim holds. \end{proof}

\begin{rem} The invariant is independent of the choice of $\widetilde{L}$. Suppose we have another lift $\widetilde{L}'$. Then $\widetilde{L}$ and $\widetilde{L}'$ differ by a totally real quadratic twist $G_{\QQ}\to C_2$ unramified at 2, and in any real quadratic field unramified at 2, the number of ramified primes congruent to $3\pmod 4$ is even. \end{rem}

In the case where $G=A_4$ and $K=\QQ$, we can take $c$ to be the union of two conjugacy classes consisting of all 3-cycles of $G$. Then $H_2(G,c,\ZZ)\cong C_2$, and we take $\widetilde{G}_c=\widetilde{A}_4$ and $\widetilde{c}$ the collection of elements of order 3 in $\widetilde{G}_c$. The invariant is defined just like in the case of $G=A_5$ above: if $L/\QQ$ is the $A_4$ field over $\QQ$ which is the fixed field of a homomorphism $\rho:G_{\QQ}\to A_4$, we can lift to an $\widetilde{A}_4$ field $\widetilde{L}$ which is tamely ramified and totally real. The invariant $\mfz(\rho)$ is again the number of primes $p\equiv 3\pmod 4$ with even ramification index in $\widetilde{L}$, modulo 2.

Much of the value of the invariant rests on our belief in the following conjecture:

\begin{conj} Assume $G$ and $c$ are such that Conditions \ref{conditions1} hold. Consider all $L/K$ with $\Gal(L/K)\cong G$ satisfying Conditions \ref{conditions2}, with the norm of the discriminant of $L$ lying in the interval $[-N,N]$. Then as we let $N$ tend to $\infty$, $\mfz(\rho)$ is equidistributed over $H_2(G,c,\ZZ)$. \end{conj}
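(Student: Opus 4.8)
The plan is to reduce the statement to a counting problem governed by the Davenport--Heilbronn / Bhargava-type asymptotics for $G$-extensions with prescribed local conditions, and then to show that the invariant $\mfz(\rho)$ is captured by a finite set of such local conditions together with a reciprocity constraint. First I would recall that, by the discussion following Definition \ref{inv-def}, $\mfz(\rho)$ is a product of local terms $\widetilde{\rho}(g_v)(\rho(g_v)^\ast)^{-1}$ over finite places $v$, each lying in $H_2(G,c,\ZZ)$, and that all but finitely many of these terms are trivial (the ones where $L/K$ is unramified at $v$ and the chosen lift $\widetilde\rho$ is unramified at $v$). In the concrete cases of interest, $H_2(G,c,\ZZ)\cong C_2$ and the local term at a ramified prime $v$ depends only on the residue class of $q_v$ modulo a small integer — by the Claim above, on whether $q_v\equiv 1$ or $3\pmod 4$. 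So the invariant becomes: the parity of the number of ramified primes of $\widetilde L/L$ lying over rational primes $\equiv 3\pmod 4$.

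Next I would set up the generating-series or Hurwitz-counting framework. Fix $G$ and $c$; extensions $L/K$ with $\Gal(L/K)\cong G$, all ramification of type $c$, and $L,K$ totally real, are counted (ordered by $|\disc|$) by a sum over admissible ramification data, and for each squarefree modulus $\mathfrak n$ the primes of $K$ dividing $\mathfrak n$ can be required to ramify in a way that realizes a prescribed local subgroup generated by an element of $c$. The key structural input is that imposing a congruence condition on the ramified primes (namely, specifying the residue class of each $q_v$ modulo $4$, or rather splitting the ramified primes into those congruent to $1$ and those congruent to $3\pmod 4$) changes the count only by a density factor that does \emph{not} favor either parity of $\mfz$ in the limit. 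Concretely I would show, via a sieve over the ramified set combined with the known main-term asymptotics for $\#\{L/K:\Gal(L/K)\cong G,\ |\disc L|\le N,\ \text{ramification of type }c\}$, that the proportion of such fields with a given value of $\mfz(\rho)$ tends to $1/|H_2(G,c,\ZZ)|$. The heuristic reason is that, conditionally on the number $k$ of ramified primes, the primes $\equiv 3\pmod 4$ among them behave like independent fair coin flips (each ramified prime is $\equiv 1$ or $3\pmod4$ with density $1/2$ by the prime number theorem in arithmetic progressions), so the parity is equidistributed; one must also handle the lift $\widetilde L$ carefully, but by the Remark after the Claim the parity is independent of the choice of lift, so there is a well-defined invariant of $L/K$ itself.

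The main obstacle — and the reason this is stated as a conjecture rather than a theorem — is the field-counting input: for most pairs $(G,c)$ one does not know asymptotics, or even the correct order of magnitude, for the number of $G$-extensions with bounded discriminant and ramification of type $c$, let alone with congruence conditions imposed on the ramified primes. Even for $G=A_4$ with $c$ the $3$-cycles, controlling the count of $A_4$-fields by discriminant with all ramification tame of type $c$, uniformly enough to insert a sieve on the residues of the ramified primes mod $4$, is delicate; the relevant Malle-type asymptotics with local conditions are known in some regimes but the error terms are not strong enough to push through the double sum over ramified primes. A secondary difficulty is that the lift $\widetilde\rho$ exists only under the hypothesis $2H_2(G,c,\ZZ)=0$ (Lemma \ref{EVlemma}), and one must check that the lifted field $\widetilde L$ can be taken totally real and tamely ramified while still controlling its ramified primes in terms of those of $L$ — this is exactly the content used in the Remark, and it should go through, but it needs the full strength of the Grunwald--Wang-type argument implicit in Ellenberg--Venkatesh. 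So the realistic plan is: (i) reduce $\mfz$ to a parity of ramified primes in a fixed congruence class, using the Claim and its $A_4$ analogue; (ii) assume (or cite, where available) the Malle-type counting asymptotic with congruence conditions on ramified primes; (iii) deduce equidistribution by the coin-flip argument. Step (ii) is where an unconditional proof would have to break through.
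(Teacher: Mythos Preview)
The paper does not prove this statement: it is explicitly labeled a \emph{Conjecture} and is left open. In lieu of a proof, the paper offers numerical evidence in the special case $G=A_4$, $c=\{3\text{-cycles}\}$, $K=\QQ$, restricted further to $A_4$ fields ramified at exactly one prime; this is the content of Table~\ref{invtdata} and the surrounding discussion. So there is no proof in the paper for you to be compared against, and your proposal is not a proof either --- as you yourself note, step (ii) is conditional on Malle-type asymptotics with local conditions that are not available.

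Beyond that structural point, there is a genuine gap in your heuristic step (iii). You model the invariant as the parity of the number of ramified primes of $L/K$ lying in the residue class $3\pmod 4$, and then argue that these residues behave like independent fair coins. But the invariant is \emph{not} a function of the ramified primes of $L$ alone: it is defined via the lift $\widetilde L$, and the primes contributing to $\mfz(\rho)$ are those with even ramification index in $\widetilde L$, which includes the extra ramification introduced in passing from $L$ to $\widetilde L$. The Remark you cite shows the invariant is independent of the choice of lift, but it does not reduce $\mfz(\rho)$ to a statistic of the ramification locus of $L$. Indeed, the paper's own Theorem~\ref{cleandescription} shows that, in the one-prime $A_4$ case, the invariant is governed by the parity of $|\Cl_2(L)|$ for the quartic subfield $L$ --- an arithmetic datum that is not at all determined by the residue class of the single ramified prime modulo $4$. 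So even granting the counting input in (ii), your coin-flip model in (iii) would not establish equidistribution; one would need instead a statement about the distribution of $\Cl_2$ of the associated quartic fields, which is itself a Cohen--Lenstra-type problem.
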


\begin{rem} We can think about invariants for $A_4$ fields unramified over their cyclic cubic subfield in one of two ways. First, of course, they are invariants of $A_4$ fields.  But such $A_4$ fields are in natural bijection with Galois cubic fields $K$ together with Galois-equivariant surjection $\varphi:\Cl(K)\twoheadrightarrow C_2\times C_2$, up to an equivalence relation on the surjections: given such a field $K$ and a surjection $\varphi$, we construct an unramified $C_2\times C_2$ cover $H$ of $K$, so that $H$ is Galois over $\QQ$ (and hence $K$), with $\Gal(H/K)\cong C_2\times C_2$ and $\Gal(H/\QQ)\cong A_4$. (This construction is described in section \ref{sec:comp}, and an example is given in detail in section \ref{sec:samp}.) Now, let $c$ be the trivial conjugacy class in $C_2\times C_2$. In this case, we have $H_2(G,c,\ZZ)\cong C_2$, so $H$ lifts to fields $\widetilde{H}_1$ and $\widetilde{H}_2$, with $\Gal(\widetilde{H}_1/K)\cong D_8$ and $\Gal(\widetilde{H}_2/K)\cong Q_8$; Lemma \ref{EVlemma} also allows us to take $\widetilde{H}_1/K$ and $\widetilde{H}_2/K$ to be tamely ramified and totally real. Since the Sylow 2-subgroup of $\widetilde{A}_4$ is isomorphic to $Q_8$, $\widetilde{H}_2$ is an $\widetilde{A}_4$-field. If, furthermore, all ramification in $H$ is of type 3-cycle, then $\widetilde{H}_2$ is a lift of $H$ of the type described above. Hence, we can also think of the invariant associated to an $A_4$ field $H$ as being the invariant associated to a pair $(K,\varphi)$, where $K$ is a $C_3$ field and $\varphi$ a surjection from $\Cl(K)$ to $C_2\times C_2$. Thus, the invariant associated to $H$ is either the invariant associated to the representation $G_{\QQ}\to A_4$ with fixed field $H$, or the invariant associated to the representation $G_K\to C_2\times C_2$, again with fixed field $H$; we get the same result in $C_2$ in either case. \end{rem}

\section{Computing the Invariant} \label{sec:comp}

In this section, we present an algorithm that takes an $A_4$ field ramified at one prime and produces an $\widetilde{A}_4$ lift of it. We then prove that the invariant associated to an $A_4$ field is closely related to the class group; this will help us compute tables of invariants much more quickly than if we had to construct the $\widetilde{A}_4$ field in every case.

Let $c$ be set of all 3-cycles in $A_4$; this is a union of two conjugacy classes in $A_4$. If $H$ is ramified at exactly one prime, we will prove below that all ramification in $H$ is of type $c$, so Lemma \ref{EVlemma} tells us that we can lift $H$ to a tamely ramified and totally real extension $\widetilde{H}$ with Galois group $\widetilde{A}_4\cong\SL_2(\FF_3)$. If $H$ is ramified at more than one prime, all we can say is that the ramification of \emph{some} prime is of type $c$.

\begin{prop} \label{A4ram} If $E/\QQ$ is an $A_4$ field ramified at exactly one rational prime, then all ramification is of type 3-cycle. \end{prop}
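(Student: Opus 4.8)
The plan is to analyze the ramification in the $A_4$-extension $E/\QQ$ by passing to the unique cubic subfield $F$ and the unique $C_2 \times C_2$-subextension, and then arguing locally at the single ramified prime $p$. Recall that $A_4$ has a normal subgroup $V \cong C_2 \times C_2$ with $A_4/V \cong C_3$; let $F = E^V$ be the corresponding cyclic cubic field and let $M = E^{C_3}$ be the degree-$4$ subfield with $\Gal(M/\QQ) \cong C_2 \times C_2$ (here I pick one of the three index-$3$ subgroups $C_3 \subset A_4$, but they are all conjugate, so this is harmless). Since $E/\QQ$ is ramified only at $p$, so are all of its subfields. The inertia group $I_p \subseteq A_4$ at a prime above $p$ is cyclic (tameness will be checked, or rather forced, below), and the subgroups of $A_4$ that are cyclic are: the trivial group, the three subgroups of order $2$ inside $V$, and the four subgroups of order $3$. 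A subgroup of order $2$ is \emph{not} of type $c$ (it consists of a product of two $2$-cycles, not a $3$-cycle), so what must be ruled out is $I_p$ having order $2$ (or, a priori, order divisible by $2$ in a wild situation); the subgroups of order $3$ are exactly the ones generated by $3$-cycles, i.e.\ of type $c$, and $I_p = 1$ would mean $p$ is unramified in $E$, contradicting that $p$ is the ramified prime. So the crux is: \textbf{$I_p$ cannot have order $2$.}

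First I would dispose of wild ramification. If $p$ is odd, then since $[E:\QQ] = 12$ and $A_4$ has no element of order $\geq 4$, the only possible wild ramification is at $p = 2$ or $p = 3$; for $p \geq 5$ all ramification is automatically tame, and the inertia group is cyclic. For $p = 3$: the $3$-Sylow of $A_4$ has order $3$, and the higher ramification groups $I_p^{(1)}$ would be a $3$-group normal in $I_p$; but a $3$-cycle generating $I_p$ is fine (tame), and if $I_p$ had order $2$ it could not have wild part at $3$ anyway. The genuinely delicate case is $p = 2$, where $V$ itself is a $2$-group and wild ramification is possible. Here I would use that the cubic field $F$ is ramified only at $2$ (if it is ramified at all): a cyclic cubic field ramified only at $2$ would have conductor a power of $2$, but the conductor of a cyclic cubic field is a product of primes $\equiv 1 \pmod 3$ (and possibly $9$), never a power of $2$ — so in fact $F/\QQ$ is \emph{unramified} at $2$, hence unramified everywhere, hence $F = \QQ$, a contradiction. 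So $p = 2$ is impossible as the ramified prime, and we may assume $p$ is odd, whence everything is tame and $I_p$ is cyclic.

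Now suppose for contradiction that $I_p$ has order $2$, so $I_p \subseteq V$. Then $p$ is unramified in $F = E^V$, so (since $p$ is the only candidate) $F/\QQ$ is unramified everywhere, forcing $F = \QQ$ — contradicting $\Gal(F/\QQ) \cong C_3$. Therefore $I_p$ has order $3$, i.e.\ is generated by a $3$-cycle, which is precisely the assertion that all ramification of $E/\QQ$ is of type $c$. The same argument applied to any prime (not just a ramified one): unramified primes contribute $I = 1$, and we have just shown the ramified prime $p$ has $I_p$ generated by a $3$-cycle; there are no other primes to worry about.

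The main obstacle I anticipate is the clean handling of $p = 2$: one must rule out $2$ being the (sole) ramified prime, and the slick way is the conductor-discriminant fact that a cyclic cubic field is never ramified at $2$, together with the observation that an everywhere-unramified abelian extension of $\QQ$ is trivial; if the author instead wants a self-contained local argument, one would examine the possible inertia in $A_4$ at $2$ and use that $V \cong C_2 \times C_2$ has no faithful cyclic quotient to eliminate $I_2 = V$, plus a ramification-group argument to control the wild part. Everything else is bookkeeping about subgroups of $A_4$ and the behavior of inertia in towers.
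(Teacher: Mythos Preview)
Your core argument---using the cyclic cubic subfield $F = E^V$ to rule out $I_p \subseteq V$---is correct and is essentially a specialization of the paper's Lemma~\ref{inertia} (inertia groups at ramified places generate $G$) to this particular situation: with one ramified prime, the inertia groups generate the normal closure of $I_p$, and if $I_p$ has order $2$ that closure lies in $V \neq A_4$. Your treatment of $p=2$ via the conductor of cyclic cubic fields is more self-contained than the paper's, which simply cites the database \cite{JonesNF} to exclude all wildly ramified one-prime $A_4$ fields.

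There is, however, a genuine gap at $p=3$. You write that ``a $3$-cycle generating $I_p$ is fine (tame)'', but at $p=3$ this is false: an order-$3$ inertia group in residue characteristic $3$ can perfectly well be wild ($I_p^{(1)}=I_p$), and the paper's definition of ``ramification of type $c$'' explicitly requires tameness. Worse, if $p=3$ is the sole ramified prime then tame ramification is \emph{impossible}: a tame $I_3$ would be cyclic of order prime to $3$, hence of order $1$ or $2$, both excluded (order $2$ by your $F$-argument, order $1$ since $p$ is ramified). So one must actually show that no $A_4$ extension of $\QQ$ is ramified only at $3$. Your cubic-subfield trick does not help here, since $F=\QQ(\zeta_9)^+$ is a perfectly good cyclic cubic field ramified only at $3$; what is needed is that $F$ admits no $C_2\times C_2$ extension, unramified at all finite places, on which $\Gal(F/\QQ)$ acts nontrivially---in effect a narrow class group computation for $\QQ(\zeta_9)^+$. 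The paper bypasses all of this by the database citation.

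One harmless slip: $E^{C_3}$ is a non-Galois quartic field (the order-$3$ subgroups are not normal in $A_4$), not a $C_2\times C_2$ extension of $\QQ$; since you never use $M$, this does not affect anything.
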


\begin{rem} Over other base fields, this Proposition is false. For example, if we let $K=\QQ(\sqrt{-59})$ and $L/K$ the extension given by the polynomial $x^4-x^3-7x^2+11x+3$, then $L/K$ is an $A_4$-extension ramified only at the (unique) prime $\mfp$ of $K$ above 59, but $\mfp$ factors as $\mfP^2$, where $\mfP$ is a prime ideal of $L$. \end{rem}

This Proposition follows quickly from the following more general Lemma:

\begin{Lemma} \label{inertia} If $E/\QQ$ is a finite Galois extension with Galois group $G$, then the inertia groups at the ramified finite places of $E$ generate $G$. \end{Lemma}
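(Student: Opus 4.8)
The statement is that for a finite Galois extension $E/\QQ$ with group $G$, the inertia subgroups at the ramified finite primes of $E$ generate $G$. The key input is the classical fact that $\QQ$ has no unramified extensions: by Minkowski's theorem, every number field other than $\QQ$ has $|\disc|>1$, hence every nontrivial finite extension of $\QQ$ is ramified at some finite prime. I would exploit this by passing to a quotient.

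\medskip

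\emph{Step 1.} Let $H\le G$ be the subgroup generated by all the inertia groups $I_{\mfP}$ at the ramified finite primes $\mfP$ of $E$. Since $G$ acts transitively on the primes above a given rational prime and conjugates inertia groups to inertia groups, the collection of all these inertia groups is stable under conjugation by $G$; therefore $H$ is a \emph{normal} subgroup of $G$.

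\emph{Step 2.} Let $F=E^{H}$ be the fixed field of $H$, so $F/\QQ$ is Galois with group $G/H$. I claim $F/\QQ$ is everywhere unramified at finite primes. Indeed, let $p$ be a rational prime and $\mfp$ a prime of $F$ above it; pick a prime $\mfP$ of $E$ above $\mfp$. The inertia group of $\mfP$ over $\QQ$ is (by definition) contained in $H$ if $p$ ramifies in $E$, and is trivial if $p$ is unramified in $E$ — in either case it lies in $H$. Hence the inertia group of $\mfp$ in $\Gal(F/\QQ)=G/H$, which is the image of $I_{\mfP}$ under $G\to G/H$, is trivial. So no finite prime ramifies in $F/\QQ$.

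\emph{Step 3.} By Minkowski's theorem (every number field $\ne\QQ$ has discriminant of absolute value $>1$, equivalently $\QQ$ admits no nontrivial everywhere-unramified extension), $F=\QQ$, i.e. $G/H$ is trivial, i.e. $H=G$. This proves the lemma.

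\medskip

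\textbf{The main obstacle.} There is no serious obstacle; the only point requiring care is the bookkeeping in Step 2 — correctly identifying the inertia group of $\mfp$ in $\Gal(F/\QQ)$ with the image of the inertia group of $\mfP$ in $\Gal(E/\QQ)$ under the quotient map, which is the standard compatibility of inertia groups in towers of Galois extensions. One should also note that the lemma genuinely needs the base field to be $\QQ$ (or more generally a field with no unramified extensions): over a number field with nontrivial class group the argument breaks at Step 3, which is exactly the phenomenon illustrated by the $\QQ(\sqrt{-59})$ remark. Finally, although the statement mentions only the finite places, allowing infinite places to ramify as well would not change anything, since Minkowski's bound already rules out extensions unramified at all finite primes regardless of behavior at infinity.
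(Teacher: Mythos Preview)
Your proof is correct and is essentially the same as the paper's: the paper defines $E_0$ as the intersection of the fixed fields of the inertia groups (equivalently, the fixed field of the subgroup they generate), observes that $E_0/\QQ$ is Galois and unramified at all finite places, and concludes $E_0=\QQ$. Your version is just a bit more explicit in noting that $H$ is normal and in naming Minkowski's theorem as the input.
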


\begin{proof} Let $E_0$ be the intersection of the fixed fields of all the inertia groups. Then $E_0/\QQ$ is a finite Galois extension unramified at all finite places. Hence $E_0=\QQ$, and so the inertia groups generate $G$. \end{proof}

\begin{proof}[Proof of Proposition \ref{A4ram}] There are no wildly ramified $A_4$ extensions of $\QQ$ ramified at exactly one rational prime (see \cite{JonesNF}), so $E$ must be tamely ramified. Hence, its ramification type must either be that of 3-cycles, or that of products of two disjoint 2-cycles. The latter case cannot happen by Lemma \ref{inertia}, because the products of two disjoint 2-cycles do not generate $A_4$. \end{proof}

Now, we shall see how to lift a totally real $A_4$ field $H$ ramified at exactly one rational prime to a tamely ramified and totally real $\widetilde{A}_4$ field $\widetilde{H}$. Note that, since $\widetilde{H}/H$ will be a quadratic extension, tamely ramified is equivalent to being unramified above 2.

\begin{algorithm}{Making an $\widetilde{A}_4$ extension of $H$}{\label{A4tilde-algo} \qinput{A quartic polynomial $f$ defining a quartic field $L$ with Galois closure a totally real $A_4$ field $H$ ramified at exactly one prime.} \qoutput{An element $\alpha\in L$ so that $L(\sqrt{\alpha})$ has Galois closure an $\widetilde{A}_4$ field $\widetilde{H}$, and so that $\widetilde{H}$ is totally real and tamely ramified.}} Let $\{\alpha_i\}$ be a set of representatives of $\mfo_L^\times/\mfo_L^{\times 2}$ which includes 1. \\ Let $\{C_j\}_{j\in J}$ be the 2-torsion ideal classes of $L$. \\ For $j\in J$, let $I_j$ denote an integral ideal in $C_j$. Let the ideal $(1)$ be the representative of the trivial ideal class. \\ Each $I_j^2$ is a principal ideal; let $\beta_j$ be a generator for $I_j^2$. \\ Let $\gamma_1=1$. \\ Let $p$ be the rational prime at which $L$ is ramified. Then $p\mfo_L$ splits as $\mfp_1\mfp_2^3$, for some prime ideals $\mfp_1,\mfp_2\subset\mfo_L$. Let $J=\mfp_1\mfp_2$. Suppose that the order of $J$ in the class group is $r$. Let $\gamma_2$ be a generator for the principal ideal $J^r$. \\ Let $\Delta=\{\alpha_i\beta_j\gamma_k\}$. \\ For $\delta\in\Delta$, check if $L(\sqrt{\delta})$ has Galois group $\widetilde{A}_4$. Stop once we have found one that does, and call this element $\delta$. \label{findadelta} \\ If $L(\sqrt{\delta})$ is tamely ramified and totally real, let $\alpha=\delta$. \\ If $L(\sqrt{\delta})$ is tamely ramified and totally complex, let $q$ be a rational prime with $q\equiv 3\pmod 4$ so that $L(\sqrt{\delta})$ is unramified at $q$. Let $\alpha=-q\delta$. \\ If $L(\sqrt{\delta})$ is wildly ramified and totally real, let $q$ be a rational prime with $q\equiv 3\pmod 4$ so that $L(\sqrt{\delta})$ is unramified at $q$. Let $\alpha=q\delta$. \\ If $L(\sqrt{\delta})$ is wildly ramified and totally complex, let $\alpha=-\delta$. \\ Return $\alpha$. \end{algorithm}

\begin{proof}[Proof of Algorithm \ref{A4tilde-algo}] By Lemma \ref{EVlemma}, we know that there is an $\alpha\in H$ so that $H(\sqrt{\alpha})$ is Galois over $\QQ$ with Galois group $\widetilde{A}_4$. Suppose we have such an $\alpha$. Let $q$ be a rational prime different from $p$, and let $q\mfo_H=\mfq_1\cdots\mfq_g$. In order for $H(\sqrt{\alpha})$ to be Galois over $\QQ$ it is necessary and sufficient that the class of $\alpha$ in $H^\times/H^{\times 2}$ is stable under the action of $\Gal(H/\QQ)$. If the class of $\alpha$ in $H^\times/H^{\times 2}$ is $\Gal(H/\QQ)$-stable, then the parity of $v_{\mfq_i}(\alpha)$ must be the same for all $i$. If $v_{\mfq_i}(\alpha)\equiv 1\pmod 2$ for all $i$ and the class of $\alpha$ is $\Gal(H/\QQ)$-stable, then $v_{\mfq_i}(\alpha/q)\equiv 0\pmod 2$ for all $i$, and the class of $\alpha/q$ is still $\Gal(H/\QQ)$-stable. Hence, we may assume that $v_{\mfq}(\alpha)\equiv 0\pmod 2$ for all primes $\mfq$ of $H$ lying over a rational prime different from $p$. Furthermore, the parities of $v_{\mfp_i}(\alpha)$ are equal for all primes $\mfp_i$ of $H$ lying over $p$. Let $\Xi$ be the set of square classes of $H$ with even valuation at all primes not lying over $p$, and with all valuations at primes over $p$ having the same parity.

Let $B$ be the kernel of the map $\widetilde{A}_4\twoheadrightarrow A_4$. Then $\widetilde{A}_4$ acts transitively and faithfully on a set of 8 objects partitioned into blocks of size 2 so that $B$ fixes the blocks. The quotient $A_4\cong\widetilde{A}_4/B$ acts on the blocks in the usual way that $A_4$ acts on 4 objects. Hence any $\widetilde{A}_4$ field is the Galois closure of an octic field obtained by adjoining the square root of some square class in a quartic field. Hence, we may restrict our list of square classes to check still further by letting $\Delta$ be the set of square classes in $\Xi$ which contain a representative in $L$. This shows that we can find a $\delta$ in Step \ref{findadelta} so that $L(\sqrt{\delta})$ has Galois group $\widetilde{A}_4$.


The remaining steps explain how we can twist by a quadratic character in order to remove wild ramification and ramification at $\infty$. Let $\widetilde{\rho}:G_{\QQ}\to\widetilde{A}_4$ be the Galois representation corresponding to the number field $H(\sqrt{\delta})$. We can find some quadratic character $\chi:G_{\QQ}\to C_2$ so that the representation $\chi\widetilde{\rho}$ is tamely ramified and unramified at $\infty$. This completes the proof. \end{proof}

Once we have found the desired $\widetilde{L}$, we can simply count the number of ramified primes congruent to $3\pmod 4$ in $\widetilde{L}$ in order to determine the invariant $\mfz(\rho)$.

Frequently, it is possible to compute the invariant without constructing an explicit lift to an $\widetilde{A}_4$ field. (Still, as a matter of good discipline and for the sake of generality, it is good to know how to perform the explicit construction.) We recall that $\Cl_2(K)$ denotes the Sylow 2-subgroup of $\Cl(K)$. We define variants such as $\Cl^+_2(K)$ to mean the Sylow 2-subgroup of $\Cl^+(K)$, and in general, a subscript of 2 in any sort of class group will denote the Sylow 2-subgroup of that class group. In the situation at hand, we have the following characterization of the invariant:

\begin{thm} \label{cleandescription} Let $K$ be a $C_3$ field with prime conductor so that $\Cl_2(K)\cong C_2\times C_2$, and let $H$ be the everywhere unramified $C_2\times C_2$ extension of $K$, so that $H$ is an $A_4$ field. Let $L\subset H$ be the fixed field of any 3-cycle in $\Gal(H/\QQ)$, so that $L$ is a non-Galois quartic field over $\QQ$ with Galois closure $H$. Suppose furthermore that $\Cl_2^+(L)$ is cyclic. Then the invariant associated to $H$ is 1 if $\Cl_2(L)$ is trivial and 0 if $\Cl_2(L)$ is nontrivial. \end{thm}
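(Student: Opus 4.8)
The plan is to compute the invariant $\mfz$ directly from its definition as the number of primes $p\equiv 3\pmod 4$ with even ramification index in the lift $\widetilde{H}$ (equivalently, in the lift $\widetilde{L}$ of the quartic field $L$), and then to translate that count into class-group data of $L$ by genus-theory / class-field-theory considerations. First I would invoke Proposition \ref{A4ram}: since $K$ has prime conductor, $H$ is ramified at exactly one rational prime, so all ramification of $H/\QQ$ is of type 3-cycle, and Lemma \ref{EVlemma} (together with Algorithm \ref{A4tilde-algo}) produces a totally real, tamely ramified lift $\widetilde{H}$ with $\Gal(\widetilde{H}/\QQ)\cong\widetilde{A}_4\cong\SL_2(\FF_3)$, whose Sylow $2$-subgroup is $Q_8$. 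Working inside the quartic field $L$: the extension $\widetilde{L}/L$ is quadratic, so $\widetilde{L}=L(\sqrt\alpha)$ for some $\alpha\in L^\times$, and by the analysis in Algorithm \ref{A4tilde-algo} we may take $\alpha$ to have even valuation at every prime of $L$ not over $p$ and common parity of valuations at the primes over $p$ (and $\widetilde{L}/L$ unramified at $2$). The invariant $\mfz(\rho)$ is then $(-1)$ to the number of primes of $L$ of residue characteristic $\equiv 3\pmod 4$ that ramify in $\widetilde{L}/L$.

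Next I would relate $\widetilde{L}/L$ to the narrow class field of $L$. Because $\widetilde{L}/L$ is an abelian (degree-$2$) extension, it corresponds to a quadratic character of a ray class group of $L$; since it is unramified at $2$ and, by the genus-theoretic constraint, ramified only at primes over $p$ and possibly at infinite places, it is cut out by a character of $\Cl^+(L)$ times at most a character ramified at primes over $p$. The key structural input is the hypothesis that $\Cl_2^+(L)$ is \emph{cyclic}: this forces the $2$-part of the group of everywhere-unramified quadratic (narrow-sense) extensions of $L$ to be ``as small as possible,'' so that the distinction between $\widetilde{L}/L$ being ramified or unramified at the finite places away from $p$ is governed precisely by whether $\Cl_2(L)$ (wide class group) is trivial or not. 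Concretely: if $\Cl_2(L)$ is nontrivial, then $\alpha$ can be taken to be (a generator of the square of) a nontrivial $2$-torsion ideal class, so $\widetilde{L}/L$ is unramified at all finite primes, hence at all primes $\equiv 3 \pmod 4$, giving $\mfz=0$; if $\Cl_2(L)$ is trivial (but $\Cl_2^+(L)\cong C_2$), then any quadratic $\alpha$ as above must have genuine ramification, and one checks via the reciprocity/product formula that the number of ramified primes $\equiv 3\pmod 4$ is odd, giving $\mfz=1$. Here the $\widetilde{A}_4$-condition is what pins down the parity: $\widetilde{L}/L$ must be a $Q_8$-type lift rather than a $D_8$-type lift, and the product formula $\prod_v (\text{local symbol}) = 1$ over all places $v$, together with the known local behavior at $2$ (unramified), at $\infty$ (totally real, so trivial), and at $p$ (type $c$, ramification degree of $\widetilde{L}/L$ over $p$ being odd since the inertia is a $3$-cycle that lifts to order $3$), forces the product over the remaining ramified finite places to equal $-1$ exactly when $\Cl_2(L)$ is trivial.

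I expect the main obstacle to be the second step: carefully controlling which quadratic extension $\widetilde{L}/L$ actually arises, i.e.\ identifying $\alpha$ up to squares and up to the allowed twists, and then extracting the \emph{parity} (not just the value) of the count of ramified primes $\equiv 3\pmod 4$. The cyclicity of $\Cl_2^+(L)$ is doing real work here — without it there would be several candidate unramified quadratic extensions and the correspondence between $\mfz$ and the triviality of $\Cl_2(L)$ could break — so the delicate point is to show that under this hypothesis the $Q_8$-lift is unique enough that its ramification is determined, and to run the product-formula argument rigorously. A secondary technical nuisance is bookkeeping between $H/\QQ$ and $L/\QQ$: ramified primes of $\widetilde{H}$ vs.\ $\widetilde{L}$, using that $L$ is non-Galois and that a rational prime $\equiv 3\pmod 4$ ramifying in $\widetilde{H}$ with even index corresponds to a prime of $L$ ramifying in $\widetilde{L}/L$, together with the already-noted fact (as in the $A_5$ claim above) that all primes of $L$ have odd ramification degree over $\QQ$ while ramified primes in $\widetilde{L}/L$ have ramification degree exactly $2$.
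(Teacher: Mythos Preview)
Your plan has the right endpoints but is missing the key bridge that the paper uses, and the ``product formula'' step you sketch is not doing the work you hope it will.

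The paper does \emph{not} try to count ramified primes in an arbitrary lift. Instead it first shows, by explicit twisting, that the $\widetilde{A}_4$ lift can always be arranged to be unramified at \emph{all} finite places of $H$ (equivalently of $L$). Concretely: start with any totally real tamely ramified $\widetilde{\rho}$; if two primes $p_1,p_2\equiv 3\pmod 4$ ramify in $\widetilde{H}/H$, twist by the character of $\QQ(\sqrt{p_1p_2})$ to kill both; if $q\equiv 1\pmod 4$ ramifies, twist by $\QQ(\sqrt{q})$; if a single $q\equiv 3\pmod 4$ remains, twist by $\QQ(\sqrt{-q})$. After these twists $\widetilde{H}/H$ is unramified at every finite place, though possibly now totally imaginary. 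Only \emph{then} does the cyclicity of $\Cl_2^+(L)$ enter: it forces this finite-unramified $\widetilde{L}$ to be the unique narrow-class-field quadratic extension of $L$, and the invariant is read off from whether that extension is totally real (i.e.\ whether it comes from $\Cl_2(L)$ rather than just $\Cl_2^+(L)$).

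Your argument runs the logic in the opposite direction: you pick the class-field extension first and then want to assert it is the $\widetilde{A}_4$ lift. But when you write ``if $\Cl_2(L)$ is nontrivial, then $\alpha$ can be taken to be a generator of the square of a nontrivial $2$-torsion ideal class,'' you have only produced \emph{some} unramified quadratic extension of $L$; you have not shown its Galois closure is $\widetilde{A}_4$ rather than, say, $A_4\times C_2$ or the $D_8$-type cover. The uniqueness from cyclicity of $\Cl_2^+(L)$ only helps once you already know the $\widetilde{A}_4$ lift admits a finite-unramified representative --- and that is exactly the twisting step you skipped. Similarly, in your $\Cl_2(L)$ trivial case, the ``product formula'' you invoke is vague; the paper never needs it, because after the twisting reduction there is at most one ramified prime and its congruence class is forced. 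Your remark that ramification at primes over $p$ might persist in $\widetilde{L}/L$ is also off: after the paper's twists, $\widetilde{L}/L$ is unramified at \emph{all} finite places, including those over $p$.
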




\begin{proof} By the above, we have an $\widetilde{A}_4$ field $\widetilde{H}$ containing $H$ so that $\widetilde{H}$ is totally real and tamely ramified. While the construction of $\widetilde{H}$ is not unique, any two such $\widetilde{H}$'s differ only by a quadratic twist $\chi:G_{\QQ}\to C_2$. Let $\widetilde{\rho}:G_{\QQ}\to\widetilde{A}_4$ be the Galois representation associated to one such $\widetilde{H}$. Suppose that $\widetilde{H}/H$ were ramified at two primes of $H$ above two distinct primes $p_1$ and $p_2$ of $\QQ$, so that $p_1\equiv p_2\equiv 3\pmod 4$. Let $\chi:G_{\QQ}\to C_2$ be the quadratic character associated to the number field $\QQ(\sqrt{p_1p_2})$. Then the number field associated to the representation $\chi\widetilde{\rho}$ is again an $\widetilde{A}_4$ field which is still tamely ramified, totally real, and contains $H$, and the corresponding quadratic extension of $H$ is ramified at exactly the primes at which $\widetilde{H}$ ramifies, with the exception of the primes above $p_1$ and $p_2$, where it is now unramified. Similarly, if $\widetilde{H}/H$ were ramified at a prime in $H$ above some rational prime $q\equiv 1\pmod 4$, then if $\chi$ is the quadratic character associated to $\QQ(\sqrt{q})$, then the number field associated to the representation $\widetilde{\rho}\chi$ is now unramified at the primes above $q$. Hence, we may assume that there are no primes congruent to $1\pmod 4$ for which the primes in $H$ above $p$ are ramified in $\widetilde{H}/H$, and there is at most one such prime congruent to $3\pmod 4$. If we have a prime $q\equiv 3\pmod 4$ for which the primes above $q$ are ramified in $\widetilde{H}/H$, let $\chi$ be the quadratic character associated to $\QQ(\sqrt{-q})$. Then the field associated to $\chi\widetilde{\rho}$ is unramified at the primes above $q$ in $H$.

The above paragraph shows us how to produce a quadratic extension $\widetilde{H}/H$ unramified at all finite places, so that $\Gal(\widetilde{H}/\QQ)\cong\widetilde{A}_4$. By the argument in the proof of Algorithm \ref{A4tilde-algo}, $\widetilde{H}$ descends to a quadratic extension $\widetilde{L}$ of $L$, unramified at all finite places, so that the Galois closure of $\widetilde{L}$ over $\QQ$ is $\widetilde{H}$. Hence, $\Cl_2^+(L)$ is nontrivial. If $\Cl_2^+(L)$ is cyclic, then there is a \emph{unique} nontrivial quadratic extension of $L$ unramified at all finite places, so this extension must be $\widetilde{L}$. In this case, $\widetilde{L}$ and hence $\widetilde{H}$ are totally real if and only if $\Cl_2(L)$ is nontrivial. If this happens, then $\widetilde{H}/H$ is everywhere unramified (including at infinity), so the invariant is 0. If $\Cl_2(L)$ is trivial, then $\widetilde{H}/H$ is ramified only at infinity, so we can twist by some character associated to a field $\QQ(\sqrt{-p})$ for some $p\equiv 3\pmod 4$ to obtain a totally real $\widetilde{H}$ ramified only at $p$. Hence, the invariant in this case is 1. In either case, the invariant is $\#\Cl_2(L)\pmod 2$. \end{proof}

\begin{rem} The hypothesis that $\Cl_2^+(L)$ is cyclic holds very frequently. In fact, there are no exceptions in the fields we tested for inclusion in the data given in Table \ref{invtdata}. One might suspect that the hypothesis always holds, but we were not able to give a proof, or to come up with a counterexample. \end{rem}


For our computations, we will need to start with a $C_3$ field $K$ with $\Cl_2(K)\cong C_2\times C_2$ and construct an $A_4$ field $H$ containing $K$ so that $H/K$ is everywhere unramified. We now explain how that is done.

\begin{algorithm}{Constructing an $A_4$ field from a $C_3$ field $K$ with $\Cl_2(K)\cong C_2\times C_2$}{\label{A4-algo}\qinput{A cubic polynomial $f$ defining a Galois cubic field $K$.} \qoutput{A quartic polynomial $g$ so that the Galois closure of $g$ is an $A_4$ field $H$ containing $K$, with $H/K$ everywhere unramified.}} Let $\{\alpha_i\}$ be a set of representatives of $\mfo_K^\times/\mfo_K^{\times 2}$. \\ Let $\{C_j\}_{j\in J}$ be the 2-torsion ideal classes of $K$. \\ For $j\in J$, let $I_j$ denote an integral ideal in $C_j$. Let the ideal $(1)$ be the representative of the trivial ideal class. \\ Each $I_j^2$ is a principal ideal; let $\beta_j$ be a generator for $I_j^2$. \\ Let $\Delta=\{\alpha_i\beta_j\}$. \\ For $\delta\in\Delta$, let $K_\delta$ be the Galois closure over $\QQ$ of $K(\sqrt{\delta})$. If $K_\delta$ has Galois group $A_4$ and is totally real and unramified at 2, let $\alpha=\delta$ and stop. \\ Let $x^3-a_2x^2+a_1x-a_0$ be the minimal polynomial of $\alpha$ over $\QQ$. \\ Let $b_2=-2a_2$, $b_1=-8\sqrt{a_0}$, $b_0=a_2^2-4a_1$. \\ Let $h(x) = x^4+b_2x^2+b_1x+b_0$. \\ (Optional.) Using the LLL algorithm, find a polynomial $g$ with smaller coefficients than $h$ so that $g$ and $h$ generate the same field; this is implemented in PARI/GP \cite{PARI} as {\tt{polredabs}}. \\ Return $g$. \end{algorithm}

\begin{proof}[Proof of Algorithm \ref{A4-algo}] We first show that there is some $\alpha\in\Delta$ so that the Galois closure $H$ of $K(\sqrt{\alpha})$ has Galois group $A_4$ over $\QQ$ and so that $H/K$ is everywhere unramified. By class field theory, we know that there is some such $\alpha\in K^\times$, so it suffices to show that if $\alpha\not\in\Delta K^{\times 2}$, then $K(\sqrt{\alpha})/K$ is ramified somewhere. Observe that for $K(\sqrt{\alpha})/K$ to be unramified, it is necessary (but not sufficient) that $\alpha$ have even valuation at all places of $K$. Those elements of $K^\times$ which have even valuation at all places of $K$ are precisely the elements of $\Delta K^{\times 2}$, so this shows that we can find such an $\alpha\in\Delta$.

Now we explain the construction of $h(x)$. The $A_4$ field $H$ is the Galois closure of a quartic field $L$ over $\QQ$. Since $\alpha$ has degree 3 over $\QQ$ and is not a square, $\sqrt{\alpha}$ has degree 6. Let us call its Galois conjugates $\pm\sqrt{\alpha},\pm\sqrt{\beta},\pm\sqrt{\gamma}$. Now, $L$ is generated by $r=\sqrt{\alpha}+\sqrt{\beta}+\sqrt{\gamma}$. The conjugates of $r$ are $r_2=\sqrt{\alpha}-\sqrt{\beta}-\sqrt{\gamma}$, $r_3=-\sqrt{\alpha}+\sqrt{\beta}-\sqrt{\gamma}$, and $r_4=-\sqrt{\alpha}-\sqrt{\beta}+\sqrt{\gamma}$, and so we can check explicitly that $h$ as constructed in Algorithm \ref{A4-algo} is the minimal polynomial of $r$. \end{proof}


Suppose now that $K$ is a $C_3$ field ramified at exactly one rational prime $p$. Then $H$ as constructed in Algorithm \ref{A4-algo} is also ramified at $p$ and nowhere else. Furthermore, $H$ is totally real.

\section{A Sample Invariant Computation} \label{sec:samp}

Let us compute the invariant for the smallest $A_4$ field ramified at one prime. In order to build this $A_4$ field, we start with the smallest $C_3$ field with prime conductor and class group $C_2\times C_2$. A polynomial generating this field is $p(x)=x^3-x^2-54x+169$; the field is ramified only at 163. (The fact that this is the smallest such $A_4$ field seems not to be related to the most famous fact about the number $163$ and class groups, namely that $-163$ is the discriminant of the largest imaginary quadratic field with class number 1. However, it \emph{is} related to the other interesting property of 163: that 163 is the \emph{smallest} $p$ for which the class number of $\QQ(\zeta_p+\zeta_p^{-1})$ has class number greater than 1; see \cite{Mcl10}.) Let $K$ denote this field, and let $\alpha$ be a root of $p$ in $K$. The unit group is \[\mfo_K^\times=(\alpha-4)^{\ZZ}\times(\alpha^2+4\alpha-33)^{\ZZ}\times\{\pm 1\}.\] Two ideals whose ideal classes generate the class group are $(5,\alpha-2)$ and $(5,\alpha-1)$, and the squares of these ideals are $(\alpha^2+4\alpha-32)$ and $(\alpha^2+4\alpha-35)$, respectively.

To find the Hilbert class field of $K$, it suffices to look at $K(\sqrt{\beta})$, where $\beta$ is the product of elements of some subset of $\{-1,\alpha-4,\alpha^2+4\alpha-33,\alpha^2+4\alpha-32,\alpha^2+4\alpha-35\}$. We find that, if $\beta$ is one of $\{\gamma_1,\gamma_2,\gamma_3\}$, where $\gamma_1=\alpha^2+4\alpha-32$, $\gamma_2=(\alpha-4)(\alpha^2+4\alpha-33)(\alpha^2+4\alpha-35)=12\alpha^2+48\alpha-395$, and $\gamma_3=(\alpha-4)(\alpha^2+4\alpha-33)(\alpha^2+4\alpha-32)(\alpha^2+4\alpha-35)=169\alpha^2+688\alpha-5612$, then $K(\sqrt{\beta})$ is an unramified extension of $K$. Hence, if $\beta$ is one of these elements, then the Hilbert class field $H(K)$ of $K$ is the Galois closure of $K(\sqrt{\beta})$.

Now, we find an equation for a quartic field $L$ whose Galois closure is $H(K)$. Take $\beta$ as in the above paragraph, so that $H(K)$ is the Galois closure of $K(\sqrt{\beta})$. Suppose the Galois conjugates of $\beta$ in $K$ are $\beta_1=\beta,\beta_2,\beta_3$. Then the minimal polynomial of $\sqrt{\beta}$ over $\QQ$ has roots $\pm\sqrt{\beta_1}$, $\pm\sqrt{\beta_2}$, and $\pm\sqrt{\beta_3}$. An example of a quartic polynomial with the same Galois closure has roots $\sqrt{\beta_1}+\sqrt{\beta_2}+\sqrt{\beta_3}$, $\sqrt{\beta_1}-\sqrt{\beta_2}-\sqrt{\beta_3}$, $-\sqrt{\beta_1}+\sqrt{\beta_2}-\sqrt{\beta_3}$, and $-\sqrt{\beta_1}-\sqrt{\beta_2}+\sqrt{\beta_3}$. A polynomial with these roots is $x^4-34x^2-40x+121$. Using the PARI/GP function {\tt polredabs}, we find that another polynomial that generates the same field is $f(x)=x^4-x^3-7x^2+2x+9$. Let $L$ be the field $\QQ[x]/(f(x))$, and let $\gamma$ be a root of $f$ in $L$.

Now, we must construct a degree-8 extension of $L$ whose Galois group is isomorphic to $\widetilde{A}_4$. First, we must find generators for the unit group of $L$. The unit group of $L$ is isomorphic to $\ZZ^3\times\{\pm 1\}$, and a basis for the torsion-free part is $\{\gamma^2-2,\gamma+2,\gamma^2-2\gamma-4\}$. The class group of $L$ is trivial, so we get no contribution from 2-torsion ideal classes. Finally, $L$ is ramified exactly at the prime 163, and $(163)$ factors as $\mfp_1\mfp_2^3$, where $\mfp_1=(\gamma^3-4\gamma-4)$ and $\mfp_2=(-4\gamma^3+9\gamma^2+16\gamma-26)$. Hence, $\mfp_1\mfp_2=(6\gamma^3-11\gamma^2-23\gamma+23)$. Thus, some field of the form $L(\sqrt{\delta})$, where $\delta$ is the product of elements of some subset of $\{\gamma^2-2,\gamma+2,\gamma^2-2\gamma-4,6\gamma^3-11\gamma^2-23\gamma+23\}$, has Galois group $\widetilde{A}_4$. We find that, if $\delta=\gamma+2$, then the Galois closure of $L_1=L(\sqrt{\delta})$ has Galois group $\widetilde{A}_4$ over $\QQ$. Now, $L_1$ is totally real, but it is ramified at 2, and hence not tamely ramified. Thus, we need to twist by a character that is ramified at one prime congruent to $3\pmod 4$. In particular, $L_2=L(\sqrt{3\delta})$ has Galois group $\widetilde{A}_4$, is totally real, and is tamely ramified, so it is a lift of the desired form. Now, in order to compute $\mfz(\rho)$, we need to determine the number of primes congruent to $3\pmod 4$ that are ramified to even order. There are two ramified primes of $L_2$, namely 3 and 163. Only 3 is ramified to even order, so $\mfz(\rho)=1$.

This computation, and all others in this paper, were done using Sage \cite{sage} and PARI/GP \cite{PARI}.

\section{The Data}

We collected data from all the $C_3$ fields $K$ ramified at exactly one prime below $10^8$ such that the Sylow 2-subgroup of the class group is $K$ is isomorphic to $C_2\times C_2$ and constructed the associated $A_4$ fields. There were 377550 such fields. However, in 21 cases, the computation was prohibitively lengthy due to the fact that many small primes are inert; therefore, we skipped these cases. Their absence does not make any substantial difference to the evidence of equidistribution, or lack thereof. Of the 377529 fields considered, 202647 have invariant 1 and 174882 have invariant 0. So, while we might be forgiven for expecting that the invariant equidistributes among the two classes, the data seem to exhibit a slight bias that gradually goes away. Table~\ref{invtdata} gives incremental data for the invariants. The numbers in the third column are decreasing, but based on the trends in the observed data, it does not appear that our current computational abilities allow us to observe them getting very close to $1/2$.

\begin{table}[ht]\caption{Invariant Data}{\begin{tabular}{ccc} $N$ & Invariant 1 & Proportion with invariant 1 \\ \hline 100 & 55 & .5500 \\ 200 & 104 & .5200 \\ 300 & 160 & .5333 \\ 400 & 212 & .5300 \\ 500 & 266 & .5320 \\ 1000 & 536 & .5360 \\ 2000 & 1063 & .5315 \\ 4000 & 2183 & .5458 \\ 6000 & 3279 & .5465 \\ 8000 & 4370 & .5463 \\ 10000 & 5457 & .5457 \\ 20000 & 10862 & .5431 \\ 30000 & 16267 & .5422 \\ 40000 & 21638 & .5410 \\ 50000 & 27064 & .5413 \\ 60000 & 32400 & .5400 \\ 70000 & 37768 & .5395 \\ 80000 & 43176 & .5397 \\ 90000 & 48578 & .5398 \\ 100000 & 53889 & .5389 \\ 150000 & 80717 & .5381 \\ 200000 & 107465 & .5373 \\ 250000 & 134348 & .5374 \\ 300000 & 161112 & .5370 \\ 350000 & 187918 & .5369 \\ 377529 & 202647 & .5368 \end{tabular}} \label{invtdata} \end{table}

The first column denotes the number of fields, the second denotes the number with invariant 1, and the third denotes the proportion with invariant 1. Hence, we suspect, somewhat hesitantly, that the two classes do equidistribute, but that there is a secondary term of slightly lower order that leads to an apparent bias that persists for a long time. Based on the numerical evidence, and the fact that the number of cubic fields with absolute value of the discriminant at most $x$ is of the form \[ax+bx^{5/6}+o(x^{5/6})\] for certain explicitly known constants $a$ and $b$ (see \cite{BST} and \cite{TT}), we might conjecture that the proportion of these $C_3$ fields with invariant 1 among the first $x$ by discriminant is \[1/2+cx^{-1/6}\log(x)+o(x^{-1/6}\log(x)),\] where $c\approx 0.024$; we expect a logarithmic term because we are parametrizing fields in a slightly different manner from \cite{BST} and \cite{TT}. Still, there is not yet enough data to be able to distinguish between an error term of the form $cx^{-1/6}\log(x)$ and, perhaps, $c'x^{-1/8}$, so this conjecture ought to be taken with more than a grain of salt.


\section*{Acknowledgments}

I would like to thank my advisor, Akshay Venkatesh, for suggesting this problem to me and for providing me with a large amount of insight and wisdom. In addition, I am grateful to Jordan Ellenberg, Silas Johnson, J\"urgen Kl\"uners, Hendrik Lenstra, Carl Pomerance, David Roberts, Craig Westerland, and the anonymous referee for their useful remarks and suggestions.

\bibliographystyle{plain}
\bibliography{A4invariants}

\end{document}